\theoremstyle{definition}
\newtheorem{defn}{Definition}
\titlerunning{On the Commuting Probability of Finite Rings}
\begin{document}
\title{A Note On Square-free Commuting Probabilities of Finite Rings}
%
%
\author{Andrew Mendelsohn}
\authorrunning{A. Mendelsohn}
%
\institute{Department of EEE, Imperial College London, London, SW7 2AZ, United Kingdom. \\ \href{andrew.mendelsohn18@imperial.ac.uk}{andrew.mendelsohn18@imperial.ac.uk}}
\maketitle
\begin{abstract}
It is shown that the commuting probability of a finite ring cannot be a fraction with square-free denominator, resolving a conjecture of Buckley and MacHale.
\end{abstract}

\section{Introduction}
Let $G$ be a finite group and $U$ denote the uniform distribution. Let the commuting probability of $G$, $P_G$, be denoted $$P_G:=Pr_{a,b\leftarrow U(G)}(ab=ba).$$ An alternative characterisation is $$P_G = \frac{1}{|G|^2}|\{(x,y)\in G^2\text{ : }xy = yx\}|.$$
Joseph made the following conjectures, where $\mathcal{G}$ is the set of all commuting probabilities of finite groups \cite{josephconjectures}:
\begin{enumerate}
\item[$\text{A}.$] All limit points of $\mathcal{G}$ are rational.
\item[$\text{B}.$] The set $\mathcal{G}$ is well ordered by $>$.
\item[$\text{C}.$] The set ${0}\cup\mathcal{G}$ is closed (that is, contains all its accumulation points).
\end{enumerate}
In \cite{eberhard}, Eberhard resolved conjectures A and B, and in \cite{limitpointscommprob} conjecture C was resolved.\\
One can extend the definition of commuting probability to finite rings: set $$P_R = \frac{1}{|R|^2}|\{(x,y)\in R^2\text{ : }xy = yx\}|=\frac{1}{|R|^2}|\{(x,y)\in R^2\text{ : }xy - yx=0\}|.$$
In \cite{contrastingprobs}, the following conjectures were made, where $\mathcal{R}$ is the set $P_R$ over all finite rings $R$:
\begin{enumerate}
    \item $1/n \not\in \mathcal{R}$ when $n \in\mathbb{N}$ is square-free.
    \item $\mathcal{R}\subset\mathcal{G}$.
    \item $\mathcal{R}$ coincides with the set of values of $P_G$ as $G$ ranges over all finite
nilpotent groups of class at most 2.
    \item All limit points of $\mathcal{R}$ are rational.
    \item For each $0 < t \leq 1$, there exists $\epsilon_t > 0$ such that $\mathcal{R} \cap (t - \epsilon_t, t) = \emptyset$.
\item $\mathcal{R}$ does not contain any of its accumulation points.
\end{enumerate}
Note conjectures 4, 5, and 6 correspond to Joseph's conjectures. Moreover, conjectures 4 and 5 would follow from the veracity of conjectures 2 or 3, since Eberhard showed $\mathcal{G}$ has rational limit points and is well-ordered. In \cite{jurasursul}, conjecture 2 was in fact resolved, and thus conjectures 4 and 5. Moreover, conjecture 3 was partially resolved: the authors obtained that $\mathcal{R}$ is a subset of the set of values of $P_G$ as $G$ ranges over all finite nilpotent groups of class at most 2. We conclude that conjectures 1, 3, and 6 are open.\\
\indent In this work, we resolve conjecture 1.
\section{Preliminaries and Prior Results}
\begin{defn}
Two finite groups $G,H$ are called \textit{isoclinic} if $G/Z(G)\cong H/Z(H)$ and $G^\prime\cong H^\prime$, and if the diagram below commutes:
$$
\begin{tikzcd}
{G}/{Z(G)} \times {G}/{Z(G)} \arrow[d] \arrow[r] & {H}/{Z(H)} \times {H}/{Z(H)} \arrow[d] \\
G^\prime \arrow[r]                               & H^\prime                              
\end{tikzcd}
$$
\end{defn}
Isoclinism preserves nilpotency class and commuting probability \cite{lescott}. A \textit{stem group} is a group in a given isoclinism class of minimal order. It is well known that if $G$ is a stem group, then $Z(G)\leq G^\prime$. For more on isoclinism, see \cite{gpspordervol1}.\\
\indent Below we state existing results in the literature we will need below.
\begin{lemma}\cite{jurasursul}\label{jurasursul}
$\mathcal{R}\subset \mathcal{G}_{n,2}$, where $\mathcal{G}_{n,2}$ is the set of commuting probabilities of all finite nilpotent groups of class at most 2.
\end{lemma}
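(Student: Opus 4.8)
The plan is to exhibit, for an arbitrary finite ring $R$, a finite nilpotent group $G$ of class at most $2$ with $P_G = P_R$; then $P_R \in \mathcal{G}_{n,2}$, and since $R$ is arbitrary this gives $\mathcal{R}\subseteq\mathcal{G}_{n,2}$. The group I would use is built on the set $R\times R$ with the ``twisted'' multiplication
$$(x,u)\cdot(y,v) \defeq (x+y,\ u+v+xy).$$
First I would verify this is a group: associativity follows from distributivity in $R$, since both $((x,u)(y,v))(z,w)$ and $(x,u)((y,v)(z,w))$ equal $(x+y+z,\ u+v+w+xy+xz+yz)$; the identity is $(0,0)$; and the inverse of $(x,u)$ is $(-x,\ -u+x^2)$. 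Note that the construction uses only the additive and multiplicative operations of $R$, so it is valid whether or not $R$ is commutative or unital.

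Next I would read off the relevant structure. A short computation shows that $(x,u)\cdot(y,v)=(y,v)\cdot(x,u)$ if and only if $xy=yx$, irrespective of $u$ and $v$, and that $[(x,u),(y,v)] = (0,\ xy-yx)$. Hence $G' \subseteq \{0\}\times R$; and since $(0,w)\cdot(y,v) = (y,\ w+v) = (y,v)\cdot(0,w)$, the subgroup $\{0\}\times R$ is central, so $G' \leq Z(G)$ and $G$ has nilpotency class at most $2$. Finally, for each pair $(x,y)\in R^2$ with $xy=yx$ all $|R|^2$ choices of $(u,v)$ yield commuting pairs in $G$, and no others do, so
$$P_G = \frac{1}{|R|^4}\bigl|\{((x,u),(y,v))\in G^2 : xy = yx\}\bigr| = \frac{|R|^2}{|R|^4}\bigl|\{(x,y)\in R^2 : xy=yx\}\bigr| = P_R.$$

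I do not anticipate a real obstacle: every step above is a direct calculation once the multiplication is written down. The single load-bearing idea is the choice of the $2$-cocycle $f(x,y)=xy$ on the abelian group $(R,+)$; it works precisely because its antisymmetrisation $f(x,y)-f(y,x)=xy-yx$ is the Lie bracket whose vanishing set governs $P_R$. Any biadditive $f\colon R\times R\to R$ satisfying the cocycle identity with $f(x,y)-f(y,x)=xy-yx$ would serve equally well, and $f(x,y)=xy$ is simply the most economical choice.
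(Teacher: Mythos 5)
Your proof is correct and takes essentially the same route as the paper: the group you build on $R\times R$ with law $(x,u)\cdot(y,v)=(x+y,\ u+v+xy)$ is exactly what the cited Jur\'a\u{s}--Ursul construction yields, namely the circle group $a\circ b=a+b+ab$ of the class-3 nilpotent ring $R\oplus R$ with multiplication $(a,x)(b,y)=(0,ab)$. You merely verify the group axioms, the class-at-most-2 property, and $P_G=P_R$ by direct computation instead of invoking the two intermediate probability-preserving transformations.
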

This statement is proved as follows: let $R$ be a finite ring. We can turn $R\oplus R$ into a nilpotent ring of class 3 by endowing it with the multiplication rule $(a,x)(b,y)=(0,ab)$. This ring can be turned into a nilpotent group $G_R$ of class at most 2 by endowing it with the binary operation $a\circ b = a + b + ab$. Both of these transformations preserve the commuting probability. Thus the values of $\mathcal{R}\setminus\{1\}$ are a subset of the values $P_G$, running over nilpotent groups $G$ of class equal to 2. Note that if $R$ has size $n$, then the resulting group $G_R$ has order $n^2$, and if $R$ is noncommutative then the resulting group is nonabelian.
\begin{lemma}\cite{lowerbdcommprob}\label{nilpprob}
$\operatorname{Pr}(G)=\frac{1}{\left|G^{\prime}\right|}\left(1+\frac{\left|G^{\prime}\right|-1}{|G: Z(G)|}\right)$ if and only if $G$ is nilpotent.
\end{lemma}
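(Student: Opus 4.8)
The proof rests on the identity $|G|^{2}\Pr(G)=\sum_{x\in G}|C_G(x)|$ together with a uniform bound on conjugacy class sizes. Since $g^{-1}xg=x[x,g]$ with $[x,g]\in G'$, the class of any $x$ satisfies $x^{G}\subseteq xG'$, so $|x^{G}|\le|G'|$ and hence $|C_G(x)|\ge|G|/|G'|$ for every $x$. Writing $\sum_{x}|C_G(x)|=|Z(G)|\,|G|+\sum_{x\notin Z(G)}|C_G(x)|$ and applying this bound to each non-central summand gives, after dividing by $|G|^{2}$,
$$\Pr(G)\ \ge\ \frac{|Z(G)|}{|G|}+\frac{1}{|G'|}\Big(1-\frac{|Z(G)|}{|G|}\Big)\ =\ \frac{1}{|G'|}\Big(1+\frac{|G'|-1}{|G:Z(G)|}\Big),$$
with equality if and only if $|x^{G}|=|G'|$ for every non-central $x$. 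So the lemma is really the assertion that the equality case of this elementary bound is exactly the nilpotent case; I would work in the regime where $|G'|$ is a prime $p$ (equivalently, $G$ is non-abelian with derived subgroup of prime order), which is where the bound is sharp --- for a general nilpotent $G$ it can be strict, e.g.\ for the dihedral group of order $16$.

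For the implication from nilpotency to the formula: since $G$ is nilpotent, the nontrivial characteristic subgroup $G'$ meets $Z(G)$ nontrivially, and as $|G'|=p$ this forces $G'\le Z(G)$. With $G'$ central, for each non-central $x$ the map $\varphi_{x}\colon G\to G'$, $g\mapsto[x,g]$, is a homomorphism with kernel $C_G(x)$ and nontrivial image, hence with image all of $G'\cong C_{p}$; therefore $|x^{G}|=|G:C_G(x)|=|G'|$ for every $x\notin Z(G)$, equality holds in the display above, and this is the claimed formula.

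For the converse: if the formula holds then every non-central conjugacy class has size $|G'|=p$. Let $z$ generate $G'$; were $z$ non-central, $z^{G}$ would be a size-$p$ subset of $zG'=G'$, forcing $z^{G}=G'\ni 1$, which is impossible for the class of a nontrivial element. Hence $z\in Z(G)$, i.e.\ $G'\le Z(G)$, and $G$ is nilpotent (indeed of class at most $2$). I expect the main obstacle to be not any hard computation but precisely this equality analysis together with the accompanying prime-order hypothesis: once one observes that $|x^{G}|\le|G'|$ always holds, both directions reduce to the single fact that a non-central element cannot have its conjugacy class fill the coset $zG'$ unless $G'$ is already central. A more structural alternative for the converse would be to examine $C_G(G')$ and the faithful action of $G/C_G(G')$ on $G'$ directly, but the route above seems the most economical.
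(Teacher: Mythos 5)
There is nothing to compare against: the paper does not prove this lemma, it simply cites it from Nath--Das \cite{lowerbdcommprob}, so your proposal has to be judged on its own. Your framework is the right one: the identity $|G|^2\operatorname{Pr}(G)=\sum_{x}|C_G(x)|$, the containment $x^G\subseteq xG'$, and the resulting bound $\operatorname{Pr}(G)\geq\frac{1}{|G'|}\left(1+\frac{|G'|-1}{|G:Z(G)|}\right)$ with equality precisely when every non-central class has size exactly $|G'|$ are all correct. Your converse direction is also essentially right, and in fact needs neither the primality of $|G'|$ nor the phrase ``let $z$ generate $G'$'': for \emph{any} non-central $z\in G'$ one has $z^G\subseteq zG'=G'$, so equality of sizes would force $1\in z^G$, a contradiction; hence $G'\leq Z(G)$ and $G$ is nilpotent of class at most $2$.

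The genuine gap is in the forward direction, and it is not one you can close: you prove ``nilpotent $\Rightarrow$ formula'' only after inserting the extra hypothesis $|G'|=p$, which is not part of the statement, and your own observation about the dihedral group of order $16$ (or $Q_8\times Q_8$, which is even of class $2$: $\operatorname{Pr}=\frac{25}{64}$ versus $\frac{1}{4}\left(1+\frac{3}{16}\right)=\frac{19}{64}$) shows the statement as printed is simply false in that direction. So what you have written is a correct proof of a different, weaker lemma, not of this one; the accurate equality characterization coming out of your own argument is ``every non-central conjugacy class has size $|G'|$'' (equivalently $x^G=xG'$ for all $x\notin Z(G)$), a condition which implies nilpotency of class at most $2$ but is strictly stronger than nilpotency. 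You should also note that the surrounding paper invokes the lemma in exactly the problematic direction (using the displayed expression as \emph{the} value of $\operatorname{Pr}(G)$ for nilpotent groups of class at most $2$), so flagging the mis-statement, rather than quietly adding the hypothesis $|G'|=p$, is the substantive point a referee would want made.
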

\begin{lemma}\cite{castelaz}\label{1/p}
If $G$ is a nilpotent group, then $P_G\ne\frac{1}{p}$.
\end{lemma}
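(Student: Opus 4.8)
The plan is to turn the hypothesis $P_G=1/p$ into a Diophantine equation via the nilpotent commuting‑probability formula (Lemma~\ref{nilpprob}), and then derive a contradiction by comparing $p$-adic valuations, the one structural input being that for a nilpotent group $|G'|$ and $[G:Z(G)]$ are divisible by exactly the same primes.

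First I would dispose of the trivial case: if $G$ is abelian then $P_G=1\neq 1/p$, so assume $G$ is nonabelian. Since $G$ is nilpotent it is the internal direct product $G=\prod_q S_q$ of its Sylow subgroups, whence $G'=\prod_q S_q'$ and $Z(G)=\prod_q Z(S_q)$; writing $n:=|G'|$ and $m:=[G:Z(G)]$ we get $n=\prod_q |S_q'|$ and $m=\prod_q [S_q:Z(S_q)]$. For the $q$-group $S_q$ both $|S_q'|$ and $[S_q:Z(S_q)]$ are powers of $q$, and each of them equals $1$ if and only if $S_q$ is abelian. Hence, for a prime $q$, the conditions $q\mid n$, ``$S_q$ nonabelian'', and $q\mid m$ are all equivalent. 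In particular $n$ and $m$ have exactly the same set of prime divisors (nonempty, as $G$ is nonabelian).

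Now suppose, for contradiction, that $P_G=1/p$. Lemma~\ref{nilpprob} gives $P_G=\tfrac1n\bigl(1+\tfrac{n-1}{m}\bigr)=\tfrac{n+m-1}{nm}$, so $p(n+m-1)=nm$, which rearranges to
$$(n-p)(m-p)=p(p-1).$$
I would finish by comparing the exponent of $p$ on the two sides. The right-hand side contributes $v_p\bigl(p(p-1)\bigr)=1$. On the left: if $p\mid n$ then $p\mid m$ by the previous paragraph, so $v_p(n-p)\ge 1$ and $v_p(m-p)\ge 1$, forcing $v_p\bigl((n-p)(m-p)\bigr)\ge 2$; while if $p\nmid n$ then $p\nmid m$, so $n-p$ and $m-p$ are both prime to $p$ and $v_p\bigl((n-p)(m-p)\bigr)=0$. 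Either way the valuation is not $1$, a contradiction, so no nilpotent $G$ has $P_G=1/p$.

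The real content is the observation that for nilpotent $G$ the numbers $|G'|$ and $[G:Z(G)]$ share the same prime support; this is exactly what converts the harmless-looking equation $(n-p)(m-p)=p(p-1)$ into an impossibility. I expect the only genuinely delicate point to be making the argument work for an \emph{arbitrary} nilpotent group rather than just a $p$-group: nothing a priori forces $G$ to be a $p$-group, and without the prime-support observation one would be reduced to enumerating the divisor pairs of $p(p-1)$ and eliminating each by hand, which needs extra facts such as $G/Z(G)\cong(\mathbb{Z}/q)^2\Rightarrow |G'|=q$. The remaining steps (checking the formula of Lemma~\ref{nilpprob} applies, the abelian base case, the valuation bookkeeping) are routine.
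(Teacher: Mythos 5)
The paper offers no proof of this lemma at all -- it is quoted verbatim from Castelaz \cite{castelaz} -- so your argument has to stand on its own, and it does not: the fatal step is the very first one, where you read Lemma~\ref{nilpprob} as asserting that \emph{every} nilpotent group satisfies $P_G=\frac{n+m-1}{nm}$ with $n=|G'|$ and $m=[G:Z(G)]$. That equality is false for nilpotent groups in general. Take $G=Q_8\times Q_8$, a $2$-group of class $2$: then $P_G=(5/8)^2=\frac{25}{64}$, while $n=4$, $m=16$ give $\frac{n+m-1}{nm}=\frac{19}{64}$. What Nath and Das \cite{lowerbdcommprob} actually prove is the lower bound $P_G\ge\frac{1}{n}\left(1+\frac{n-1}{m}\right)$, with equality exactly when every noncentral conjugacy class has size $|G'|$ (true for extraspecial $p$-groups and their isoclinic relatives, where $|G'|$ is prime and central; already false for $Q_8\times Q_8$). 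The ``if and only if $G$ is nilpotent'' phrasing in the paper's Lemma~\ref{nilpprob} is a misquotation, and your remark that checking applicability of the formula is ``routine'' is precisely where the proof breaks: the Diophantine equation $(n-p)(m-p)=p(p-1)$ is simply not available for a general nilpotent $G$ with $P_G=1/p$. In particular your argument says nothing about nilpotent groups with several nonabelian Sylow subgroups, or even $p$-groups that are direct products of smaller nonabelian ones -- cases the lemma must cover.

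The pieces that are sound are the structural observation (for nilpotent $G$, $|G'|$ and $[G:Z(G)]$ have the same prime support, via the Sylow decomposition) and the $p$-adic valuation bookkeeping; these would indeed finish the proof in the special case where the Nath--Das bound is attained, e.g.\ $|G'|=p$ with $G'\le Z(G)$. But in general one must work with $P_G=\prod_q P_{S_q}$, a product over Sylow subgroups for which no closed formula exists, and control the numerator and denominator of each factor $k(S_q)/|S_q|$ separately; that is the substance of the cited result and it cannot be short-circuited through Lemma~\ref{nilpprob}. (Two minor points: the statement should be read with $p$ prime, which your valuation argument implicitly assumes; and the abelian case is handled as you say.)
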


\section{Results}
\begin{theorem}
$\frac{1}{p}\not\in\mathcal{R}$ for all $p\in\mathbb{N}_{\geq 2}$.
\end{theorem}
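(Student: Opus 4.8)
The plan is to push the hypothesis $P_R = 1/p$ through the two reductions recalled in Section~2 and then close with an elementary divisibility argument. Assume, toward a contradiction, that $P_R = 1/p$ for some finite ring $R$ and some integer $p \geq 2$. Since $1/p < 1$, the ring $R$ is noncommutative, so by Lemma~\ref{jurasursul} (and the explicit construction recalled just after its statement) there is a \emph{nonabelian} finite nilpotent group $G$ of class $2$ with $P_G = P_R = 1/p$. Applying Lemma~\ref{nilpprob} and writing $d := |G'|$ and $m := [G : Z(G)]$, we get
$$\frac 1p = P_G = \frac{1}{d}\left(1 + \frac{d-1}{m}\right) = \frac{m + d - 1}{dm},$$
hence $dm = p\,(m + d - 1)$; in particular $m + d - 1$ divides $dm$, and $d = |G'| \geq 2$ because $G$ is nonabelian.

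The heart of the argument is the observation that for a finite nilpotent group, $|G'|$ and $[G:Z(G)]$ have exactly the same set of prime divisors. Indeed, $G$ decomposes as the direct product of its Sylow subgroups, $G = \prod_\ell G_\ell$, and correspondingly $G' = \prod_\ell G'_\ell$ and $[G:Z(G)] = \prod_\ell [G_\ell : Z(G_\ell)]$; since $|G'_\ell|$ and $[G_\ell : Z(G_\ell)]$ are powers of $\ell$, each of them exceeds $1$ precisely when $G_\ell$ is nonabelian. Thus a prime $\ell$ divides $|G'|$ if and only if $G_\ell$ is nonabelian if and only if $\ell$ divides $[G:Z(G)]$. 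Consequently every prime $\ell$ dividing $dm$ divides both $d$ and $m$, so $m + d - 1 \equiv -1 \pmod{\ell}$; therefore $\gcd(m + d - 1,\ dm) = 1$. Combined with $m + d - 1 \mid dm$ this forces $m + d - 1 = 1$, i.e.\ $m = d = 1$, contradicting $d \geq 2$. Hence $P_R \neq 1/p$, as desired.

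I do not expect a serious obstacle here, given the machinery of Section~2; the two points requiring care are (i) verifying that Lemma~\ref{jurasursul} really produces a \emph{nonabelian} group, which is what makes $d \geq 2$ and drives the contradiction, and (ii) stating the prime-support fact precisely. It is worth noting that the same computation in fact shows $P_G \neq 1/p$ for \emph{every} nonabelian finite nilpotent group $G$ and every integer $p \geq 2$, which strengthens Lemma~\ref{1/p}. An alternative route would decompose $P_G = \prod_\ell P_{G_\ell}$ and argue one prime at a time, invoking Lemma~\ref{1/p} together with its analogue for prime powers; but that approach must rule out cancellation among the numerators of the factors $P_{G_\ell}$, a complication the argument above avoids entirely.
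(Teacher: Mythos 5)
Your route differs from the paper's: the paper disposes of this theorem in two lines, combining Lemma~\ref{jurasursul} with Castelaz's Lemma~\ref{1/p} quoted as a black box, whereas you replace Lemma~\ref{1/p} by a direct computation from Lemma~\ref{nilpprob} together with the observation that for a finite nilpotent group $|G'|$ and $[G:Z(G)]$ have the same prime support. The arithmetic core of your argument is correct and clean: if $P_G=\frac{m+d-1}{dm}$ with $d=|G'|\geq 2$ and $m=[G:Z(G)]$, then every prime dividing $dm$ divides both $d$ and $m$, so $\gcd(m+d-1,dm)=1$, and divisibility forces $m+d-1=1$, contradicting $d\geq 2$. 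A genuine merit of your phrasing is that it targets every integer $p\geq 2$, which is what the theorem actually asserts, while the paper's proof invokes Lemma~\ref{1/p} only ``for any prime $p$.''

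The gap is your reliance on the equality direction of Lemma~\ref{nilpprob}. As printed, that lemma asserts $P_G=\frac{1}{|G'|}\left(1+\frac{|G'|-1}{[G:Z(G)]}\right)$ if and only if $G$ is nilpotent, but the Nath--Das result is a lower bound whose equality case is strictly narrower than nilpotency. For $G=Q_8\times Q_8$, which is nilpotent of class $2$, one has $P_G=\frac{25}{64}$ while the right-hand side equals $\frac{19}{64}$; the dihedral group of order $16$ gives $\frac{7}{16}$ versus $\frac{11}{32}$. The group $G_R$ supplied by Lemma~\ref{jurasursul} is merely nilpotent of class $2$, so you are not entitled to the exact formula, and the true inequality $P_G\geq\frac{m+d-1}{dm}$ alone yields no contradiction with $P_G=\frac{1}{p}$. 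So, although your step is licensed by the lemma exactly as the paper states it, as actual mathematics it fails at that point; the paper's own proof of this theorem sidesteps the issue entirely by citing Castelaz, and your closing claim that the computation strengthens Lemma~\ref{1/p} to all nonabelian nilpotent groups and all integers $p\geq 2$ inherits the same gap. To make your approach sound you would need either the correct equality criterion for the Nath--Das bound (which does not hold for all class-$2$ groups) or a Sylow-by-Sylow analysis that controls the numerators, which is essentially the harder work the paper defers to its later theorem.
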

\begin{proof}
By Lemma \ref{jurasursul}, $\mathcal{R}$ is contained within the set of commuting probabilities of finite nilpotent groups of class at most 2. By Lemma \ref{1/p}, this latter set does not contain $\frac{1}{p}$ for any prime $p$.
\end{proof}
Denote the set of commuting probabilities of rings of prime power order for some prime $p$ by $\mathcal{R}_p$.
\begin{proposition}
$\frac{1}{n}\not\in\mathcal{R}_p$ for any prime $p\in\mathbb{N}_{\geq 2}$ and $n\in\mathbb{N}_{>1}$.
\end{proposition}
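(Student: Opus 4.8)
The plan is to reduce the statement to a fact about finite $p$-groups and then read off the answer from Lemma~\ref{nilpprob}. Suppose, for contradiction, that $R$ is a ring of order $p^k$ with $P_R = 1/n$ for some integer $n > 1$. Since $P_R\,|R|^2$ counts commuting pairs it is a positive integer, so $n \mid |R|^2 = p^{2k}$; hence $n = p^m$ with $m \geq 1$. This already forces $P_R < 1$, so $R$ is noncommutative. (Note that Lemma~\ref{1/p} only rules out the case $n = p$, so something more is needed for general prime powers $n$.)

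Next I would apply the construction from the proof of Lemma~\ref{jurasursul} to $R$: it produces a group $G_R$ of order $p^{2k}$, nilpotent of class at most $2$, with $P_{G_R} = P_R = 1/p^m$, and since $R$ is noncommutative, $G_R$ is nonabelian. Thus it suffices to prove that a nonabelian $p$-group has no commuting probability of the form $1/p^m$; I would record this as a separate lemma, since it carries essentially all the content.

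To establish that, apply Lemma~\ref{nilpprob} to $G := G_R$ (a $p$-group, hence nilpotent). Writing $|G'| = p^a$ and $[G : Z(G)] = p^b$, nonabelianness of $G$ gives $a \geq 1$ and $b \geq 1$ (in fact $b \geq 2$, as $G/Z(G)$ cannot be nontrivial cyclic), and Lemma~\ref{nilpprob} yields
\[ P_{G} \;=\; \frac{1}{p^a}\left(1 + \frac{p^a - 1}{p^b}\right) \;=\; \frac{p^a + p^b - 1}{p^{a+b}}. \]
The crucial point is that the numerator satisfies $p^a + p^b - 1 \equiv -1 \pmod p$, so it is coprime to $p$, while also $p^a + p^b - 1 \geq 2p - 1 > 1$. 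Hence, written in lowest terms, $P_{G}$ has numerator strictly larger than $1$, contradicting $P_{G} = 1/p^m$. This contradiction proves the proposition.

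There is no genuinely hard step here once Lemmas~\ref{jurasursul} and~\ref{nilpprob} are available: the reduction to a $p$-group is immediate (as is the observation that $n$ must be a power of $p$), and the remaining input is just the elementary remark that in a nonabelian $p$-group $|G'| + [G:Z(G)] - 1$ is never divisible by $p$. The only things to be careful about are citing the group construction correctly --- in particular that it preserves the commuting probability and sends noncommutative rings to nonabelian groups --- and keeping track of the constraints $a,b \geq 1$.
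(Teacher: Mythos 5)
Your proposal is correct and follows essentially the same route as the paper: reduce via the ring-to-group construction of Lemma~\ref{jurasursul} to a nonabelian nilpotent $p$-group of class at most $2$, apply the formula of Lemma~\ref{nilpprob}, and observe that the numerator $p^a+p^b-1\equiv -1\pmod p$ is coprime to $p$ and exceeds $1$, so the probability cannot have the form $1/n$. Your version is slightly more careful than the paper's (you justify that $n$ must be a power of $p$ and that the group is nonabelian), but the core argument is identical.
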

\begin{proof}
By Lemma \ref{jurasursul}, we need only consider commuting probabilities of finite nilpotent groups of class at most 2. By Lemma \ref{nilpprob}, we know $\textit{a fortiori}$ the commuting probability of finite nilpotent groups of class at most 2 in terms of derived subgroups and centers. Suppose that for some $n\in\mathbb{N}_{\geq2}$ we have $$\frac{1}{\left|G^{\prime}\right|}\left(1+\frac{\left|G^{\prime}\right|-1}{|G: Z(G)|}\right) = \frac{1}{n}.$$ By the construction of \cite{jurasursul} considered above, wlog let $|G| = p^{e}$ for some even positive integer $e$. Then $Z(G) = p^{f}$ and $G^\prime = p^{g}$ with $0<g\leq f<e$ (since $G$ is at most class 2). Then 
\begin{align} 
\frac{1}{n} &= p^{-g}\left(1+\frac{p^{g}-1}{p^{e-f}} \right) = p^{-g}+\frac{p^{g}-1}{p^{e-f+g}}\\ &= \frac{p^{e-f}+p^{g}-1}{p^{e-f+g}}.
\end{align}
For this to hold, some power $p^{h}$ must divide the numerator, with $h>0$; but this cannot hold; for if so, then one must have $p^{e-f}+p^{g}-1 = kp$, for some integer $k\ne0$. But then $-1\equiv0\bmod p$, a contradiction.
\end{proof}

\begin{theorem}
$\frac{\ell}{n}\not\in\mathcal{R}$ for any squarefree $n\in\mathbb{N}_{>1}$ and $\ell<n$ with $\operatorname{gcd}(\ell,n)=1$.
\end{theorem}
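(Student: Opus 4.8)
The plan is to push everything down to the formula in Lemma~\ref{nilpprob}. A finite ring splits as $R=\bigoplus_p R_p$ over its $p$-primary components, and since a pair of elements of a direct sum commutes iff it commutes componentwise, $P_R=\prod_p P_{R_p}$. A commutative $R_p$ contributes $P_{R_p}=1$; for a noncommutative $R_p$ the group $G_{R_p}$ of Lemma~\ref{jurasursul} is a nonabelian nilpotent group of class $2$ whose order is a power of $p$, and since $G_{R_p}/Z(G_{R_p})$ cannot be cyclic, Lemma~\ref{nilpprob} gives
\[
P_{R_p}=\frac{p^{m_p}+p^{g_p}-1}{p^{m_p+g_p}},\qquad m_p\ge 2,\ g_p\ge 1,
\]
a fraction in lowest terms because its numerator is $\equiv-1\pmod p$. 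Write $T$ for the set of primes with $R_p$ noncommutative, $A_p:=p^{m_p}+p^{g_p}-1$, $c_p:=m_p+g_p\ge 3$, and let $v_p$ denote $p$-adic valuation. Then $R$ is noncommutative iff $T\neq\emptyset$, in which case $P_R<1$, and a short computation shows the reduced denominator of $P_R=\prod_{p\in T}A_p/p^{c_p}$ equals $\prod_{p\in T}p^{\max(0,\,c_p-v_p(\prod_{q\in T}A_q))}$; this is a squarefree integer $>1$ exactly when $\prod_{p\in T}p^{c_p-1}$ divides $\prod_{q\in T}A_q$. Since the hypotheses $\ell<n$, $n>1$, $\gcd(\ell,n)=1$ force $P_R<1$ and hence $T\neq\emptyset$, it is enough to assume this divisibility and reach a contradiction.

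Two quick reductions come first. If $2\in T$ then, as each $A_q$ is odd (a sum of two integers of equal parity, minus one), $v_2(\prod_{q\in T}A_q)=0<c_2-1$ — contradiction; so $T$ consists of odd primes. If $|T|=1$ the denominator is $p^{c_p}$ with $c_p\ge 3$, not squarefree; so $|T|\ge 2$. The piece of rigidity driving the rest is that $\min(m_q,g_q)\ge 1$ forces $\max(m_q,g_q)\le c_q-1$, hence $A_q<2q^{c_q-1}$, with the sharper $A_q<q^{c_q-1}$ whenever $\min(m_q,g_q)\ge 2$, and $A_q=q^{c_q-1}+(q-1)$ whenever $\min(m_q,g_q)=1$.

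When $T=\{p,q\}$, the divisibilities $p^{c_p-1}\mid A_q$ and $q^{c_q-1}\mid A_p$ together with $A_p<2p^{c_p-1}$, $A_q<2q^{c_q-1}$ sandwich the quantities so that (after relabelling) $A_p=q^{c_q-1}$ and $A_q=k\,p^{c_p-1}$ with $k\in\{1,2,3\}$; multiplying the two divisibilities also yields $p^{\min(m_p,g_p)-1}q^{\min(m_q,g_q)-1}\le 3$, so either $\min(m_p,g_p)=\min(m_q,g_q)=1$, or (up to labels) $p=3$ with $\min(m_p,g_p)=2$ and $\min(m_q,g_q)=1$. In each case one is left with an equation $p^a+p^b-1=q^c$ ($c\ge 2$) subject to a second divisibility, and comparing sizes shows no solution exists. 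For $|T|\ge 3$ the quantity $M:=\prod_{q\in T}A_q\big/\prod_{q\in T}q^{c_q-1}$ is a positive integer by the assumed divisibility, and is odd since $\prod_{q\in T}A_q$ is; moreover each $q$ with $\min(m_q,g_q)\ge 2$ contributes a factor $<1$ to $M$, while each $q$ with $\min(m_q,g_q)=1$ contributes $1+(q-1)/q^{\max(m_q,g_q)}$. Feeding the per-prime divisibilities $p^{c_p-1}\mid\prod_{q\ne p}A_q$ back into these bounds forces $M$, the set $\{q:\min(m_q,g_q)=1\}$ and the exponents $m_q,g_q$ into an impossible configuration, by the same mechanism as in the two-prime case.

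The main obstacle is this last step. Soft size estimates alone never close the argument, because a single $A_q$ can contribute $p$-adic content toward several primes $p$ simultaneously; what rescues the situation is that each $A_q$ is trapped in the short interval $[q^{c_q-1},2q^{c_q-1})$ (or below $q^{c_q-1}$), and is — up to an additive error less than $q$ — a power of $q$ coprime to every other prime of $T$. Turning this into a uniform treatment of the $|T|\ge 3$ case, rather than an ever-branching case analysis, is where the real work lies.
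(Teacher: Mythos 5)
Your reduction to arithmetic is sound and essentially parallels the paper's: splitting $R$ into $p$-primary components, passing to the class-$2$ groups of Lemma~\ref{jurasursul}, and using Lemma~\ref{nilpprob} to write each local factor as $(p^{m_p}+p^{g_p}-1)/p^{m_p+g_p}$ in lowest terms with $m_p\geq 2$, $g_p\geq 1$, is exactly the setup the paper works with (the paper phrases it via Sylow subgroups and stem groups, but the resulting equation is the same), and your translation of ``squarefree denominator'' into the divisibility $\prod_{p\in T}p^{c_p-1}\mid\prod_{q\in T}A_q$ is correct. Your handling of $2\in T$, of $|T|=1$, and of $|T|=2$ (the sandwich $ab<4$ forcing $A_p=q^{c_q-1}$, $A_q=kp^{c_p-1}$, then elimination by size) checks out.

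However, there is a genuine gap: the case $|T|\geq 3$ is not proved, and you say so yourself. The mechanism that closes $|T|=2$ relies on the divisibility $p^{c_p-1}\mid A_q$ being concentrated in a \emph{single} cofactor, which is what makes the product of the two divisibilities collapse to $ab<4$; once three or more primes are present, $p^{c_p-1}\mid\prod_{q\neq p}A_q$ allows the $p$-adic content to be spread over several $A_q$'s, the interval trapping $A_q\in[q^{c_q-1},2q^{c_q-1})$ no longer yields any contradiction on its own, and no concrete replacement argument is given --- ``the same mechanism as in the two-prime case'' does not apply and ``where the real work lies'' is precisely the theorem. The idea you are missing, and which the paper uses instead, is that every partial product $\prod_{i<k}A_{p_i}/p_i^{c_{p_i}}$ is itself the commuting probability of a ring (the sum of the remaining primary components), so one can induct on the number of primes and invoke Lemma 14 of \cite{prob1/3} --- the primes dividing the reduced denominator of such a probability are exactly the primes dividing the order --- to force a repeated prime factor in the denominator after peeling off one prime. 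That structural input about where the prime support of the denominator must lie, rather than size estimates, is what lets the general case go through; to complete your route you would need either to import that lemma and recast your argument as the paper's induction, or to supply a genuinely new argument for $|T|\geq 3$.
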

\begin{proof}
Any finite ring can be turned into a nilpotent group of class at most 2, such that the commuting probability of the ring is equal to the commuting probability of the group. The construction (outlined above) turns a commutative ring into a group of class 1, and a noncommutative ring into a nonabelian group of class at most 2, therefore of class equal to 2. The order of the group is the square of the order of the ring, so the Sylow subgroups of the group have order at least the square of a prime. Since the group is nilpotent, it can be written as a product of its Sylow subgroups, which are all of class at most 2, and the commuting probability of the group is the product of the commuting probabilities of its Sylow subgroups. Thus it remains to analyse the equation 
$$\frac{\ell}{n} = \prod_{i=1}^m\frac{p_i^{e_i-f_i}+p_i^{g_i}-1}{p_i^{e_i-f_i+g_i}},$$ for $m>1$ and the $p_i$ distinct, where the $e_i, f_i$, and $g_i$ are as before. Via isoclinism, we may replace $G_R$ by a class two nilpotent (stem) group $G$ with identical commuting probability and minimal order. Thus we may assume that $Z(G_R) = G_R^\prime$ (note isoclinism preserves nilpotency class and $G_R$ is class two), and moreover that none of the Sylow subgroups are abelian. The above equality simplifies to 
$$\frac{\ell}{n} = \prod_{i=1}^m\frac{p_i^{e^\prime_i-f^\prime_i}+p_i^{f^\prime_i}-1}{p_i^{e^\prime_i}},$$ where the exponents $e_i^\prime, f_i^\prime$ correspond to the group $G$. We now proceed by induction on the number of prime factors of $|G_R|$, denoted $m$.\\
\indent By Lemma 14 of \cite{prob1/3}, if $P_{G_R}=\frac{\ell}{n}$ in lowest terms, the prime factors of $n$ are precisely the prime factors of $|G_R|$. If $m=1$, it is known that $\frac{\ell}{n}\not\in \mathcal{R}_q$ for any prime $q$ and square-free $n$ (in fact, we know this to hold for $m\leq 69$ by Theorem 9 of \cite{contrastingprobs}). Suppose the statement is true up to $n=k-1$, and consider the case $n=k$; suppose, for a contradiction, that the commuting probability is equal to $\frac{\ell}{n}$, for some square-free integer $n$ and $\ell\leq n$ with $\operatorname{gcd}(\ell,n)=1$, and without loss of generality that $n$ has prime factors equal to the set of $p_i$, $i=1,...,k$: 
$$\frac{\ell}{n} = \prod_{i=1}^k\frac{p_i^{e^\prime_i-f^\prime_i}+p_i^{f^\prime_i}-1}{p_i^{e^\prime_i}}.$$
Rearrange for the following:
$$\frac{\ell\cdot p_k^{e_k^\prime}}{n\cdot (p_k^{e^\prime_k-f^\prime_k}+p_k^{f^\prime_k}-1)} = \prod_{i=1}^{k-1}\frac{p_i^{e^\prime_i-f^\prime_i}+p_i^{f^\prime_i}-1}{p_i^{e^\prime_i}}.$$
Writing the left hand side in lowest terms, we have
$$\frac{\ell\cdot p_k^{e_k^{\prime\prime}}}{n^\prime\cdot (p_k^{e^\prime_k-f^\prime_k}+p_k^{f^\prime_k}-1)} = \prod_{i=1}^{k-1}\frac{p_i^{e^\prime_i-f^\prime_i}+p_i^{f^\prime_i}-1}{p_i^{e^\prime_i}},$$ where $n^\prime$ is not divisible by $p_k$. We have a commuting probability on the right hand side with $k-1$ prime factors; so by the induction hypothesis, the denominator of the left hand side has no square factors. But we also have $p_k^{e^\prime_k-f^\prime_k}+p_k^{f^\prime_k}-1$ on the denominator of the left hand side, which is not divisible by $p_k$, and by Lemma 14 of \cite{prob1/3} must have prime factors equal to the set of $p_i$, for $i=1,...,k-1$; moreover, there can be no cancellation between these factors and $\ell$, by assumption on $\ell$. But then for at least one index $j$, $n^\prime\cdot (p_k^{e^\prime_k-f^\prime_k}+p_k^{f^\prime_k}-1)$ has a prime factor $p_j$ with multiplicity at least two, which is a contradiction.
\end{proof}

\begin{remark}
Since $\frac{1}{p}$ is an accumulation point of $\mathcal{R}_p$, $\frac{1}{n}$ is an accumulation point of $\mathcal{R}$ for all $n$. The above result thus means that many accumulation points of $\mathcal{R}$ are not contained in $\mathcal{R}$. As well as resolving the first conjecture stated at the beginning of this note, the result also makes progress on the sixth conjecture stated.
\end{remark}

\end{document}